\newtheoremstyle{mythm}{6pt}{6pt}{\itshape}{}{\bfseries}{.}{ }{} 
\newtheoremstyle{mydef}{6pt}{6pt}{}{}{\bfseries}{.}{ }{}         
\newtheoremstyle{myrem}{6pt}{6pt}{}{}{\scshape}{.}{ }{}          
\theoremstyle{mydef}
\newtheorem{definition}{Definition}[section]
\theoremstyle{mythm}
\newtheorem{theorem}[definition]{Theorem}
\newtheorem{proposition}[definition]{Proposition}
\newtheorem{lemma}[definition]{Lemma}
\theoremstyle{myrem}
\renewenvironment{proof}{\par\medbreak\noindent\textit{Proof}:\hskip.5em\ignorespaces}{\hfill\qedsymbol\medbreak} 
\renewcommand{\qedsymbol}{\hfill\rule{2mm}{2mm}}                                                                  
\newcommand{\R}{\mathbb{R}}
\newcommand{\gs}{\varphi_0}
\def\cI{\mathcal{I}}
\def\cV{\mathcal{V}}
\providecommand{\norm}[1]{\lVert#1\rVert}
\newcommand{\N}{\mathbb{N}}
\newcommand{\E}{\mathcal{E}}
\newcommand{\V}{\mathcal{V}}
\newcommand{\Lt}[1][d]{L^2(\R^{#1})}
\newcommand{\G}{\mathcal{G}}
\renewcommand{\l}{\lambda}
\newcommand{\Chi}{\protect\raisebox{2.5pt}{$\chi$}}
\begin{document}
\title{Multi-Window Weaving Frames}
\author{Monika Dörfler
\and Markus Faulhuber}
\address{NuHAG, Faculty of Mathematics, University of Vienna\\Oskar-Morgenstern-Platz 1, 1090 Vienna, Austria}

\thanks{M.D.\ was supported by the Vienna Science and Technology Fund (WWTF): project SALSA (MA14-018); M.F.\ was supported by the Austrian Science Fund (FWF): [P26273-N25] and [P27773-N23].}
\keywords{SampTA 2017, Gabor Frames, Localization Operators, Multi-Window Frames, Weaving Frames}

\begin{abstract}
	In this work we deal with the recently introduced concept of weaving frames. We extend the concept to include multi-window frames and present the first sufficient criteria for a family of multi-window Gabor frames to be woven. We give a Hilbert space norm criterion and a pointwise criterion in phase space. The key ingredient are localization operators in phase space and we give examples of woven multi-window Gabor frames consisting of Hermite functions.
\end{abstract}

\maketitle

\section{Introduction}
Recently, the concept of {\it weaving frames} has been introduced in \cite{becagrlaly16, caly15}. The motivation of this concept is found in distributed signal processing. We are given one sampling pattern and two sets of linear measurements, labelled with respect to the pattern, which both allow for a stable reconstruction of a signal, i.e.\ we have two frames. If at any sampling point we can choose one or the other method of measuring, i.e.\ at each point we choose one or the other corresponding frame element, and for any such choice we still get a stable reconstruction, i.e.\ a frame, we call the two systems woven. This concept can of course be extended to a larger (even countable) number of frames.

We give some basic examples. Of course any frame is woven with itself, however, the labelling of the frame elements is important. Let $\Phi = \lbrace \phi_1, \phi_2 \rbrace$ and $\Psi = \lbrace \psi_1, \psi_2 \rbrace$ with $\phi_1 = \psi_2 = e_1$ and $\phi_2 = \psi_1 = e_2$ where $\lbrace e_1, e_2 \rbrace$ is the standard basis for $\R^2$. Then, both $\Phi$ and $\Psi$ form an orthonormal basis for $\R^2$, but they are not woven since e.g.\ $\lbrace \phi_1, \psi_2 \rbrace$ does not span $\R^2$.

While several characterizations for families of frames to be woven have been given in \cite{becagrlaly16, caly15}, more constructive examples are rare. In particular, one of the motivating questions, namely, whether Gabor frames generated by rotated general Gaussian windows are woven, has not been answered to date. A rotated general Gaussian window is a Gaussian function whose phase space concentration is described by a 2-dimensional Gaussian which is essentially supported on a rotated ellipse. We give a more formal definition at the end of this work in equation \eqref{eq_chriped_Gauss}.

We consider a more general notion, namely multi-window weaving frames and construct particular examples based on the eigenfunctions of so-called localization operators. As an application we get that multi-window Gabor frames with Hermite functions are woven if a sufficiently high number of generating windows is chosen.
We proceed by introducing some basic concepts for the Hilbert space $\Lt[]$. Although the statements can be generalized verbatim to higher dimensions, this work does not gain any deeper insights by using a more general notation.

\subsection{Gabor Frames and Weaving Frames}

We denote by $\gamma = (x, \omega) \in \R \times \R$ a point in the \textit{time-frequency plane}, also called \textit{phase space}, and use the following notation for a \textit{time-frequency shift} by $\gamma$:
\begin{equation}\label{eq_TFShift}
	\pi(\gamma)\phi(t) = M_\omega T_x \, \phi(t) = e^{2 \pi i \omega \cdot t} \phi(t-x),
\end{equation}
for  $ x,\omega,t \in \R$, where $T_x \phi(t) = \phi(t-x)$ and $M_\omega \phi(t) = e^{2 \pi i \omega \cdot t} \phi(t)$ are the translation and  the modulation operator, respectively.

A \textit{Gabor system} for $\Lt[]$ is generated by a \textit{window function} $\phi \in \Lt[]$ and an \textit{index set} $\Gamma \subset \R^{2}$. It is denoted by
\begin{equation}
	\G(\phi,\Gamma) = \lbrace \pi(\gamma)\phi \, | \, \gamma \in \Gamma \rbrace.
\end{equation}
$\mathcal{G}(\phi,\Gamma)$ is called a \textit{frame} if for all $f \in \Lt[]$
\begin{equation}\label{Frame}
	A \norm{f}_2^2 \leq \sum_{\gamma \in \Gamma} \left| \langle f, \pi(\gamma) \phi \rangle \right|^2 \leq B \norm{f}_2^2,
\end{equation}
with $0< A \leq B < \infty$ called \textit{frame bounds}.
\begin{definition}
	We call a finite family of frames, $\G(\phi_j,\Gamma)_{j = 1}^M$, \textit{woven} if for every partition $\{\sigma_j\}_{j=1}^M$ of $\Gamma$, the family $\{\G(\phi_j,\sigma_j (\Gamma ))\}_{j = 1}^M$ is a frame for $\Lt[]$.
\end{definition}

We will sometimes use the  notation $f \approx g$ if there exist constants $0 < A \leq B < \infty$ such that $A g \leq f \leq B g$.

\section{Localization Operators}\begin{definition}
For a window $\phi \in \Lt[]$, the short-time Fourier transform (STFT) at $\gamma= (x,\omega) \in \R \times \R$ of a function $f \in \Lt[]$ is defined as $\V_\phi f(\gamma) = \langle f, \pi(\gamma)\phi\rangle$ and for $m \in L^\infty(\R^2)$ the \textit{time-frequency localization operator} with symbol $m$ for $f \in \Lt[]$ is given by
	\begin{equation}
		H_{m,\phi} f(t)= \int_{\R^2} \V_\phi f(\gamma) m(\gamma)\pi(\gamma) \phi(t) d\gamma .	
	\end{equation}
	Formally we have $H_{m,\phi} f = \V^*_\phi m \V_\phi f$.
\end{definition}

We will be interested in families of localization operators, which cover the entire time-frequency domain, such that we can derive local windows from their eigenfunctions. We require the following property.
\begin{definition}
	A family of symbols $\lbrace \eta_\gamma \colon \R^{2} \to \R \, | \, \gamma \in \Gamma, \eta_\gamma \in L^1\left( \R^{2} \right) \rbrace$ is called \textit{well-spread} if $\Gamma \subset \R^{2}$ is a discrete set without accumulation points and there exists a continuous function $g \in L^2(\R^2)$ with polynomial decay, such that $|\eta_\gamma(\xi)| \leq g(\xi-\gamma)$ for $\xi \in \R^{2}$, $\gamma \in \Gamma$.
\end{definition}
Note that for localization operators as defined above, well-spread symbol families  always lead to well-spread operator families in  the sense of \cite{doro14}, i.e., for all $z\in \mathbb{R}$, $\gamma\in\Gamma$ we have the point-wise bound 
\begin{equation}
	|\V_\phi H_{\eta_\gamma,\phi}f(z)|\leq |T_\gamma g\cdot \V_\phi f|\ast |\V_\phi \phi | (z).
\end{equation}

We will now have a closer look at the time-frequency localization operator $H_{\eta_\gamma} \colon \Lt[] \to \Lt[]$. Assuming that the family of symbols $\lbrace \eta_\gamma \, | \, \gamma \in \Gamma \rbrace$ is non-negative and well-spread we can use the results in \cite{doro14} which state that, under these conditions, the localization operator $H_{\eta_\gamma}$ is positive and trace class and, hence, can be diagonalized. Therefore, we have
\begin{equation}
	H_{\eta_\gamma,\phi} f = \sum_{k \geq 1} \l_k^\gamma \left \langle f, \phi_k^\gamma \right \rangle \phi_k^\gamma, \quad f \in \Lt[],
\end{equation}
where $\left\lbrace \phi_k^\gamma \, | \, k \in \N \right\rbrace$ is an orthonormal subset of $\Lt[]$ consisting of eigenfunctions of $H_{\eta_\gamma}$. The sequence of eigenvalues $\left( \l_k^\gamma \right)_{k=1}^\infty$ is non-increasing sequence with non-negative real numbers. We define a related operator by putting a threshold on the eigenvalues. For $\varepsilon > 0$ we define
\begin{equation}
	H_{\eta_\gamma,\phi}^\varepsilon f = \sum_{k: \, \l_k^\gamma > \varepsilon} \l_k^\gamma \left \langle f, \phi_k^\gamma \right \rangle \phi_k^\gamma, \quad f \in \Lt[].
\end{equation}
Considering  the derived first $N_\varepsilon^\gamma$ eigenfunctions and observing that they are maximally concentrated within the support of $\eta_\gamma$ in the sense that among all orthonormal sets of functions, they maximize the quantity 
$\sum_{j = 1}^{N_\varepsilon^\gamma}\int_z \eta_\gamma (z) |\V_\phi\phi_j (z)|^2 dz$, motivates an approach that uses these windows as basic windows generating a {\it multi-window} Gabor frame, \cite{zezi97}. A multi-window Gabor system is, by obvious generalization, defined as the set of functions 
$ \lbrace \pi(\gamma)\phi_k\, | \, \gamma \in \Gamma, k\in\mathcal{I} \rbrace$ and denoted by $\G( \lbrace\phi_k \rbrace_{k\in\mathcal{I}},\Gamma)$.
\begin{definition}
	A family of multi-window Gabor frames $\G( \lbrace\phi_k^j \rbrace_{k\in\mathcal{I} },\Gamma)_{j = 1}^M$ is \textit{woven} if for every partition $\{\sigma_j\}_{j=1}^M$ of $\Gamma$, the family $\{\G(\lbrace\phi_k^j \rbrace_{k\in\mathcal{I} },\sigma_j (\Gamma ))\}_{j = 1}^M$ is a frame for $\Lt[]$.
\end{definition}
\section{Varying the localization window}
In a series of papers, cf.~\cite{doro14} for references, it has been shown, that, if a  family of localization operators $H_{\eta_\gamma,\phi}$ is {\it well-spread} and $\sum H_{\eta_\gamma,\phi}$ is invertible, one has the norm equivalence $\norm{f}_2^2 \approx \sum_{\gamma \in \Gamma} \norm{H_{\eta_{\gamma},\phi} f}_2^2$. Thereby, however, the window $\phi$ defining the localization operator remains the same for all $\gamma$. In the next sections, we show how the window can be varied. The rationale behind this approach is to generate different suitable families of eigenfunctions. We will then see, that, by picking finite sets from the operators' eigenfunctions, we obtain multi-window Gabor frames, and eventually, due to the different analysis windows,  the resulting different multi-window frames will be woven. We proceed to give two conditions on varying analysis windows, for which the overall family of localization operators remains invertible.
\subsection{Norm-Conditions}
\begin{definition}
	The \textit{time-frequency concentration} $\E_{\Omega,\phi}f$ of $f \in \Lt[]$ w.r.t.~$\phi \in \Lt[]$ is given by
	\begin{equation}
		\E_{\Omega,\phi}f = \int_{\Omega} |\V_\phi f|^2 \, d \l = \langle H_{\Omega,\phi}f,\, f \rangle.
	\end{equation}
\end{definition}

\begin{lemma}\label{lem_Localization_2Windows_Difference}
	Let $\Omega \subset \R^2$ and $\phi_1$, $\phi_2$ be two window functions. Then, for every $f \in \Lt[]$ we get the estimate
	\begin{equation}
		\begin{aligned}
			\left|\E_{\Omega,\phi_1}f - \E_{\Omega,\phi_2}f \right|
			\leq \left(\norm{\phi_1}_2 + \norm{\phi_2}_2 \right) \norm{\phi_1-\phi_2}_2 \norm{f}_2^2.
		\end{aligned}
	\end{equation}
\end{lemma}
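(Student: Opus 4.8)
The plan is to reduce the difference of the two time-frequency concentrations to a single integral over $\Omega$ and then estimate the integrand pointwise. Writing $a(\gamma) = \V_{\phi_1}f(\gamma)$ and $b(\gamma) = \V_{\phi_2}f(\gamma)$, I would start from
\begin{equation}
	\E_{\Omega,\phi_1}f - \E_{\Omega,\phi_2}f = \int_\Omega \left( |a(\gamma)|^2 - |b(\gamma)|^2 \right) d\gamma,
\end{equation}
so that by the triangle inequality for integrals it suffices to control $\left| |a|^2 - |b|^2 \right|$ pointwise in $\gamma$.

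The key algebraic observation is the factorization $|a|^2 - |b|^2 = a\,\overline{(a-b)} + (a-b)\,\overline{b}$, which yields the pointwise bound $\left| |a|^2 - |b|^2 \right| \leq |a-b|\left( |a| + |b| \right)$. Here I would exploit that the STFT depends linearly on the window through the difference, namely $a(\gamma) - b(\gamma) = \langle f, \pi(\gamma)(\phi_1-\phi_2) \rangle = \V_{\phi_1-\phi_2}f(\gamma)$, so the common factor is itself an STFT, now taken with respect to the window $\phi_1 - \phi_2$. This is precisely what ties the estimate to $\norm{\phi_1-\phi_2}_2$.

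Next I would integrate the pointwise bound over $\Omega$ and split it into two terms, applying the Cauchy--Schwarz inequality to each: one factor being $\left( \int_\Omega |a-b|^2\,d\gamma \right)^{1/2}$ and the other being $\left( \int_\Omega |a|^2\,d\gamma \right)^{1/2}$, respectively $\left( \int_\Omega |b|^2\,d\gamma \right)^{1/2}$. Since all integrands are non-negative, I may enlarge the domain from $\Omega$ to all of $\Rt$, turning each factor into a full STFT $L^2$-norm. The final step invokes the orthogonality (Moyal) relation $\norm{\V_\psi f}_2 = \norm{\psi}_2\norm{f}_2$, applied with $\psi \in \{\phi_1, \phi_2, \phi_1-\phi_2\}$, to replace each factor by the corresponding window norm times $\norm{f}_2$. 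Collecting the two resulting terms gives exactly $\left( \norm{\phi_1}_2 + \norm{\phi_2}_2 \right) \norm{\phi_1-\phi_2}_2 \norm{f}_2^2$.

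This argument is a short chain of standard inequalities, so I do not expect a serious obstacle; the only point that deserves a little care is the factorization producing $|a-b|$ as the common factor. A naive use of $\left| |a|^2 - |b|^2 \right| = \left( |a| + |b| \right) \left| |a| - |b| \right|$ would leave $\left| |a| - |b| \right|$, which one would then have to dominate by $|a-b|$ via the reverse triangle inequality before recognizing the STFT of $\phi_1 - \phi_2$; the direct factorization sidesteps this detour and connects immediately to Moyal's identity.
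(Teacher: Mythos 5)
Your proof is correct and takes essentially the same route as the paper: your pointwise factorization $|a|^2-|b|^2 = a\,\overline{(a-b)}+(a-b)\,\overline{b}$ is exactly the quadratic form of the paper's operator identity $H_{\Omega,\phi_1}-H_{\Omega,\phi_2}=\V^*_{\phi_1-\phi_2}\Chi_\Omega\V_{\phi_1}+\V^*_{\phi_2}\Chi_\Omega\V_{\phi_1-\phi_2}$, which the paper obtains by adding and subtracting the cross term $\V^*_{\phi_2}\Chi_\Omega\V_{\phi_1}$. Both arguments then conclude with Cauchy--Schwarz and Moyal's identity; you merely work at the level of integrands instead of operators, and you spell out the final estimate that the paper leaves implicit.
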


\begin{proof}
	First, we note that we can write $H_{\Omega,\phi} = \V^*_\phi \Chi_\Omega \V_\phi$. We get
	\begin{equation}
		\begin{aligned}
			H_{\Omega,\phi_1}-H_{\Omega,\phi_2} = & \V^*_{\phi_1} \Chi_\Omega \V_{\phi_1} - \V^*_{\phi_2} \Chi_\Omega \V_{\phi_2}\\
			= & \V^*_{\phi_1} \Chi_\Omega \V_{\phi_1} - \V^*_{\phi_2} \Chi_\Omega \V_{\phi_2} + \V^*_{\phi_2} \Chi_\Omega \V_{\phi_1} - \V^*_{\phi_2} \Chi_\Omega \V_{\phi_1}\\
			= & \V^*_{\phi_1-\phi_2} \Chi_\Omega \V_{\phi_1} - \V^*_{\phi_2} \Chi_\Omega \V_{\phi_2-\phi_1}\\
			= & \V^*_{\phi_1-\phi_2} \Chi_\Omega \V_{\phi_1} + \V^*_{\phi_2} \Chi_\Omega \V_{\phi_1-\phi_2}
		\end{aligned}
	\end{equation}
	Now we compute
	\begin{equation}
		\begin{aligned}
			\left|\langle H_{\Omega,\phi_1}f,\, f \rangle - \langle H_{\Omega,\phi_2}f,\, f \rangle \right|
			\leq & \left| \left\langle \V^*_{\phi_1-\phi_2} \Chi_\Omega \V_{\phi_1}f, f \right\rangle \right| + \left| \left\langle \V^*_{\phi_2} \Chi_\Omega \V_{\phi_1-\phi_2} f, f \right\rangle \right|\\
			\leq & \big(\norm{\phi_1}_2+\norm{\phi_2}_2 \big) \norm{\phi_1-\phi_2}_2 \norm{f}_2^2.
		\end{aligned}
	\end{equation}
\end{proof}

\begin{proposition}\label{prop_norm_equivalence}
	Let $\phi_1, \, \phi_2 \in \Lt[]$ with $\norm{\phi_i}_2 = 1$. Let $\lbrace \eta_{\gamma_1} \, | \, \gamma_1 \in \Gamma_1 \rbrace \cup \lbrace \eta_{\gamma_2} \, | \, \gamma_2 \in \Gamma_2 \rbrace$ be a well-spread family of non-negative symbols on $\R^2$ with $\sum_{i=1}^2 \sum_{\gamma_i \in \Gamma_i} \eta_{\gamma_i} \approx 1$. If $\norm{\phi_1-\phi_2}_2 < \frac{1}{2}$, then the following holds.
	\begin{equation}
		A \norm{f}_2^2 \leq \sum_{i=1}^2 \sum_{\gamma_i \in \Gamma_i} \norm{H_{\eta_{\gamma_i},\phi_i} f}_2^2 \leq B \norm{f}_2^2.
	\end{equation}
\end{proposition}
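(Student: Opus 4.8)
The plan is to deduce the statement from the general principle recalled at the beginning of this section: since the combined symbol family $\lbrace\eta_{\gamma_i}\rbrace$ is well-spread and involves only the two \emph{fixed} windows $\phi_1,\phi_2$, the associated operator family $\lbrace H_{\eta_{\gamma_i},\phi_i}\rbrace$ is again well-spread, so it suffices to show that the self-adjoint operator
\begin{equation}
	S := \sum_{i=1}^2\sum_{\gamma_i\in\Gamma_i} H_{\eta_{\gamma_i},\phi_i}
\end{equation}
is invertible; the two-sided estimate $\sum_{i,\gamma_i}\norm{H_{\eta_{\gamma_i},\phi_i}f}_2^2 \approx \norm{f}_2^2$ then follows with some $0<A\le B<\infty$. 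Each summand is positive, hence $S\ge 0$ and $S$ is bounded above, so only a lower bound $\langle Sf,f\rangle\ge A\norm{f}_2^2$ must be established.

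To obtain it I would compare $S$ with the single-window operator $S_1 := \sum_{i,\gamma_i} H_{\eta_{\gamma_i},\phi_1} = \V^*_{\phi_1}\big(\sum_{i,\gamma_i}\eta_{\gamma_i}\big)\V_{\phi_1}$, in which $\phi_1$ is used on both index sets. Writing $a\le\sum_{i,\gamma_i}\eta_{\gamma_i}\le b$ for the constants implicit in $\sum_{i,\gamma_i}\eta_{\gamma_i}\approx 1$, and using $\norm{\phi_1}_2=1$, so that $\V^*_{\phi_1}\V_{\phi_1}=\mathrm{Id}$, one gets $\langle S_1 f,f\rangle=\int\big(\sum_{i,\gamma_i}\eta_{\gamma_i}\big)|\V_{\phi_1}f|^2\,d\gamma\ge a\norm{f}_2^2$. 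Since $\langle Sf,f\rangle = \langle S_1 f,f\rangle + \sum_{\gamma_2\in\Gamma_2}\big(\langle H_{\eta_{\gamma_2},\phi_2}f,f\rangle - \langle H_{\eta_{\gamma_2},\phi_1}f,f\rangle\big)$, the whole task reduces to controlling the $\Gamma_2$-difference.

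Here Lemma~\ref{lem_Localization_2Windows_Difference} enters, but it must be applied with the sum over $\gamma_2$ taken \emph{before} estimating. The same algebraic identity as in its proof, with $\Chi_\Omega$ replaced by $\eta_{\gamma_2}$, gives $H_{\eta_{\gamma_2},\phi_1} - H_{\eta_{\gamma_2},\phi_2} = \V^*_{\phi_1-\phi_2}\,\eta_{\gamma_2}\,\V_{\phi_1} + \V^*_{\phi_2}\,\eta_{\gamma_2}\,\V_{\phi_1-\phi_2}$; pairing with $f$ and summing over $\gamma_2$ lets me pull the summation inside, producing the single weight $E_2:=\sum_{\gamma_2\in\Gamma_2}\eta_{\gamma_2}$ in each integral. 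The crucial point — which I expect to be the main obstacle — is exactly this interchange: one must \emph{not} estimate term by term (which would cost $\sum_{\gamma_2}\norm{\eta_{\gamma_2}}_\infty$ and diverge), but instead exploit $0\le E_2\le \sum_{i,\gamma_i}\eta_{\gamma_i}\le b$ pointwise. After Cauchy–Schwarz and the norm identities $\norm{\V_{\phi_1}f}_2=\norm{\V_{\phi_2}f}_2=\norm{f}_2$ and $\norm{\V_{\phi_1-\phi_2}f}_2=\norm{\phi_1-\phi_2}_2\norm{f}_2$, the difference is bounded by $2b\,\norm{\phi_1-\phi_2}_2\norm{f}_2^2$.

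Combining the two estimates yields $\langle Sf,f\rangle\ge\big(a-2b\,\norm{\phi_1-\phi_2}_2\big)\norm{f}_2^2$, which is strictly positive — hence $S$ invertible — as soon as $\norm{\phi_1-\phi_2}_2<\tfrac{a}{2b}$. For a normalized partition of unity $\sum_{i,\gamma_i}\eta_{\gamma_i}\equiv 1$, so that $a=b=1$, this is precisely the hypothesis $\norm{\phi_1-\phi_2}_2<\tfrac12$, and the asserted norm equivalence follows from the general principle. The upper bound $B$ requires no smallness assumption and drops out of the same statement, since $S$ is automatically bounded above by $b\,\mathrm{Id}$.
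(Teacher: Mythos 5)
Your proposal is correct in substance and shares the paper's overall skeleton --- both arguments reduce the claim, via the principle from \cite{doro14} recalled at the start of the section, to a lower bound $\langle Sf,f\rangle \geq A\norm{f}_2^2$ for the operator sum, and both extract that bound from the telescoping identity $\V^*_{\phi_1} m \V_{\phi_1} - \V^*_{\phi_2} m \V_{\phi_2} = \V^*_{\phi_1-\phi_2} m \V_{\phi_1} + \V^*_{\phi_2} m \V_{\phi_1-\phi_2}$ plus Cauchy--Schwarz --- but your execution of the lower bound is genuinely different. The paper sets $m_i=\sum_{\gamma_i}\eta_{\gamma_i}$ and $\Omega_i=\supp(m_i)$, bounds $\langle Sf,f\rangle\ge A\left(|\E_{\Omega_1,\phi_1}f|+|\E_{\Omega_2,\phi_2}f|\right)$, and then argues entirely at the level of \emph{characteristic-function} concentrations: the covering $\Omega_1\cup\Omega_2=\R^2$ gives $\norm{f}_2^2\le|\E_{\Omega_1,\phi_1}f|+|\E_{\Omega_2,\phi_1}f|$, and Lemma~\ref{lem_Localization_2Windows_Difference} (the identity applied to $\Chi_{\Omega_2}$, not to $m_2$) swaps $\phi_1$ for $\phi_2$ on $\Omega_2$ at cost $2\norm{\phi_1-\phi_2}_2\norm{f}_2^2$, yielding the constant $A\left(1-2\norm{\phi_1-\phi_2}_2\right)$. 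You never introduce the supports: you compare $S$ with the single-window sum $S_1$ and apply the identity to the weight $m_2$ itself. Your route buys rigor at one spot where the paper is shaky: the paper's inequality $\sum_i\int_{\Omega_i}m_i|\V_{\phi_i}f|^2\,d\l\ge A\sum_i|\E_{\Omega_i,\phi_i}f|$ tacitly requires each $m_i$ to be bounded below on its own support, which does not follow from $m_1+m_2\approx 1$ alone (one weight may decay to zero precisely where the other is large); your weight-level estimate needs no such thing. What it costs is the threshold: your constant $a-2b\norm{\phi_1-\phi_2}_2$ makes the stated hypothesis $\norm{\phi_1-\phi_2}_2<\frac12$ sufficient only when $a=b$ (e.g.\ an exact partition of unity), as you acknowledge, whereas the paper's passage to characteristic functions is exactly what decouples the $\frac12$ threshold from the ratio $a/b$ --- at the price of the questionable step above. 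Two small points: your closing remark $S\le b\,\mathrm{Id}$ should read $S\le 2b\,\mathrm{Id}$, since the two different windows prevent merging the two integrals; and the frame-type upper bound $\sum_{i,\gamma_i}\norm{H_{\eta_{\gamma_i},\phi_i}f}_2^2\le B\norm{f}_2^2$ does not follow from boundedness of $S$ alone but from the well-spreadness machinery, which both you and the paper invoke only implicitly.
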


\begin{proof}
	We define $m_i = \sum_{\gamma_i} \eta_{\gamma_i}$ and $\text{supp}(m_i) = \Omega_i$. We get
	\begin{equation}
		\begin{aligned}
			& \sum_{i=1}^2 \left\langle \sum_{\gamma_i \in \Gamma_i} H_{\eta_{\gamma_i},\phi_i} f, \, f \right\rangle
			= \sum_{i=1}^2 \left\langle H_{m_i,\phi_i} f, \, f \right\rangle
			= \sum_{i=1}^2 \left\langle \V_{\phi_i}^* (m_i \V_{\phi_i}) f, \, f \right\rangle\\
			= & \sum_{i=1}^2 \left\langle m_i \V_{\phi_i} f, \, \V_{\phi_i} f \right\rangle
			= \sum_{i=1}^2 \int_{\Omega_i} m_i |\V_{\phi_i} f|^2 \, d\l
			\geq \, A \left(|\E_{\Omega_1,\phi_1} f| + |\E_{\Omega_2,\phi_2} f|\right)
		\end{aligned}
	\end{equation}
	with $A > 0$ as $\sum_{i=1}^2 \sum_{\gamma_i \in \Gamma_i} \eta_{\gamma_i} \approx 1$. From this assumption we also conclude that $\Omega_1 \cup \Omega_2 = \R^2$ and we have
	\begin{equation}
		\begin{aligned}
			\norm{f}_2^2 \leq |\E_{\Omega_1,\phi_1}f + \E_{\Omega_2,\phi_1}f|
			\leq |\E_{\Omega_1,\phi_1}f| + |\E_{\Omega_2,\phi_2}f| + |\E_{\Omega_2,\phi_1}f - \E_{\Omega_2,\phi_2}f|.
		\end{aligned}
	\end{equation}
	Therefore, it follows from Lemma \ref{lem_Localization_2Windows_Difference} that
	\begin{equation}
		\begin{aligned}
			\underbrace{\left(1 - 2 \norm{\phi_1 - \phi_2}_2 \right)}_{C_{\phi_1,\phi_2}} \norm{f}_2^2 \leq |\E_{\Omega_1,\phi_1}f| + |\E_{\Omega_2,\phi_2}f|.
		\end{aligned}
	\end{equation}
	$C_{\phi_1,\phi_2} > 0$ if $\norm{\phi_1-\phi_2}_2 < \frac{1}{2}$. Hence, we get that
	\begin{equation}
		\sum_{i=1}^2 \left\langle \sum_{\gamma_i \in \Gamma_i} H_{\eta_{\gamma_i},\phi_i} f, \, f \right\rangle \geq A C_{\phi_1,\phi_2} \norm{f}_2^2.
	\end{equation}
	It is immediate that the upper bound is finite.
\end{proof}
 \subsection{Phase-Space Conditions}
We will now establish sufficient conditions by pointwise estimates in phase space. Let
\begin{equation}
	\varphi_0(t) = 2^{1/4} e^{-\pi t^2}
\end{equation}
be the standard Gaussian of $L^2$-unit norm. In this section, we consider a collection of  windows $\Phi = \{\phi_i, \, i \in \cI\}$ such that 
\begin{equation}\label{cond1}
	| \cV_{\gs} \phi_i | \ast  |\cV_{\gs} \phi_i|(z) \leq C(1+|z|^2)^{-s}
\end{equation}
for all $i \in \cI$, $s>1$ and a constant $C$. Also, we let the symbols $\lbrace \eta_\gamma \colon \R^2 \to \R \, | \, \gamma \in \Gamma, \eta_\gamma \in L^1\left( \R^2 \right) \rbrace$ be well-spread with  $\sum_{\gamma \in \Gamma} \eta_\gamma (z) \approx 1$ and a window $ \phi_{i(\gamma )}$ from $\Phi$ be chosen for every index $\gamma \in \Gamma$.  We
consider the family of localization operators $H_{\eta_\gamma, \phi_{i(\gamma)}}$, i.e. at each $\gamma$ we allow the window to be picked from the  collection $\Phi = \{\phi_i, \, i \in \cI\}$.
We will need the following lemma.
\begin{lemma}\label{Lem_WS}
The family of localization operators $H_{\eta_\gamma, \phi_{i(\gamma)}}$, is well-spread, i.e. $\forall z \in \R^2$
	$$|\V_\phi H_{\eta_\gamma, \phi_{i(\gamma)}}f(z)|\leq |T_\gamma g\cdot \V_\phi f|\ast |\V_{\varphi_0} \varphi_0 |(z).$$
\end{lemma}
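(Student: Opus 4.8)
The plan is to reproduce the computation underlying the pointwise bound of \cite{doro14} quoted just above the lemma, but now keeping track of the fact that the window $\phi_{i(\gamma)}$ appearing \emph{inside} the localization operator need no longer coincide with the analysis window $\phi$ used on the outside. First I would insert the factorization $H_{\eta_\gamma,\phi_{i(\gamma)}} = \V_{\phi_{i(\gamma)}}^*\,\eta_\gamma\,\V_{\phi_{i(\gamma)}}$ and apply $\V_\phi$ to both sides. Since $\V_\phi \V_{\phi_{i(\gamma)}}^*$ acts as an integral operator with kernel $\langle \pi(\gamma')\phi_{i(\gamma)}, \pi(z)\phi\rangle$, this yields
\begin{equation}
	\V_\phi H_{\eta_\gamma,\phi_{i(\gamma)}}f(z) = \int_{\Rt} \eta_\gamma(\gamma')\,\V_{\phi_{i(\gamma)}}f(\gamma')\,\langle \pi(\gamma')\phi_{i(\gamma)},\pi(z)\phi\rangle\, d\gamma'.
\end{equation}

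Next I would pass to absolute values and simplify the kernel. By the covariance of the STFT one has $|\langle \pi(\gamma')\phi_{i(\gamma)},\pi(z)\phi\rangle| = |\V_\phi\phi_{i(\gamma)}(z-\gamma')|$, while well-spreadness of the symbol family gives $|\eta_\gamma(\gamma')|\leq g(\gamma'-\gamma) = T_\gamma g(\gamma')$. Substituting both, the right-hand side is dominated by the convolution $\big(|T_\gamma g\cdot \V_{\phi_{i(\gamma)}}f|\big)\ast |\V_\phi\phi_{i(\gamma)}|(z)$. It remains to remove the dependence on the chosen window $\phi_{i(\gamma)}$: using the reproducing formula for $\varphi_0$ (recall $\norm{\varphi_0}_2 = 1$) I would rewrite $\V_{\phi_{i(\gamma)}}f$ through $\V_\phi f$ at the cost of a second correlation factor of the form $|\V_{\varphi_0}\phi_{i(\gamma)}|$, and then invoke the standing assumption \eqref{cond1}, which bounds the self-correlation $|\V_{\varphi_0}\phi_{i(\gamma)}|\ast|\V_{\varphi_0}\phi_{i(\gamma)}|$ by a fixed, rapidly decaying envelope uniformly in $i$. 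Collapsing the two correlation factors in this way leaves a single convolution against the Gaussian ambiguity function $|\V_{\varphi_0}\varphi_0|$ and produces the claimed estimate. Throughout, the interchanges of integration are justified by Fubini, which is licensed by the $L^2$ polynomial decay of $g$ together with \eqref{cond1}.

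The main obstacle is precisely this last reduction. The formal expansion is routine and identical in spirit to the fixed-window case; the new difficulty is that the analysis window inside the operator is allowed to vary with $\gamma$, so every bound must be uniform over the admissible choices from $\Phi$. Assumption \eqref{cond1} is tailored exactly for this: it supplies a common, index-independent decay rate for the correlation functions $\V_{\varphi_0}\phi_{i(\gamma)}$, which is what makes the collapse to a single Gaussian-type kernel legitimate and keeps the convolution kernel integrable. Care is also needed to keep the weight $T_\gamma g$ attached to the correct variable when performing the change of window, so that the final estimate has $T_\gamma g$ multiplying $\V_\phi f$ as stated.
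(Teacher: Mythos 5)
Your first half is correct and is surely the intended computation (note the paper itself states Lemma \ref{Lem_WS} without proof, deferring to the fixed-window bound quoted from \cite{doro14} in Section 2, so the assessment here is on the merits). Indeed, from $H_{\eta_\gamma,\phi_{i(\gamma)}} = \V_{\phi_{i(\gamma)}}^*\,\eta_\gamma\,\V_{\phi_{i(\gamma)}}$, the kernel identity $|\langle \pi(\gamma')\phi_{i(\gamma)},\pi(z)\phi\rangle| = |\V_\phi\phi_{i(\gamma)}(z-\gamma')|$ and $|\eta_\gamma|\le T_\gamma g$ one gets
\begin{equation}
	|\V_\phi H_{\eta_\gamma,\phi_{i(\gamma)}}f(z)| \;\le\; \bigl(|T_\gamma g\cdot \V_{\phi_{i(\gamma)}}f|\bigr)\ast|\V_\phi\phi_{i(\gamma)}|(z),
\end{equation}
which is the fixed-window estimate except that the weighted factor involves the \emph{wrong} STFT.

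The gap is exactly the step you defer to the end. The change-of-window formula gives the pointwise bound $|\V_{\phi_{i(\gamma)}}f| \lesssim |\V_\phi f|\ast|\V_{\phi_{i(\gamma)}}\phi|$, but multiplication by $T_\gamma g$ does not commute with convolution: substituting it above produces terms of the form
\begin{equation}
	T_\gamma g\cdot\bigl(|\V_\phi f|\ast|\V_{\phi_{i(\gamma)}}\phi|\bigr),
\end{equation}
and no convolution identity turns this into $(T_\gamma g\cdot|\V_\phi f|)\ast(\cdots)$; "collapsing the two correlation factors" is legitimate only for the unweighted integrals, since in the underlying double integral the weight sits at the intermediate variable, not at the variable where $\V_\phi f$ lives. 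This is the only nontrivial point of the lemma, and you name it ("care is needed to keep the weight attached to the correct variable") without resolving it, which is not a detail but the proof itself. The standard repair, as in \cite{doro14}, is a Peetre-type weight transfer: since $g$ decays polynomially, dominate $g(\gamma'-\gamma)\le C\,\tilde g(u-\gamma)\,(1+|\gamma'-u|)^{N}$, move the weight onto the variable of $\V_\phi f$, and absorb the polynomially growing factor $(1+|\gamma'-u|)^{N}$ into the window-correlation kernels \emph{before} convolving them; note that assumption \eqref{cond1} as literally stated controls only the unweighted autocorrelation $|\cV_{\gs}\phi_i|\ast|\cV_{\gs}\phi_i|$, so one needs this absorption to be argued (uniformly in $i$) rather than invoked. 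Finally, what this argument can deliver is an envelope of the form $C(1+|z|^2)^{-s}$, i.e.\ well-spreadness in the sense of \cite{doro14}; it cannot literally produce the Gaussian kernel $|\V_{\gs}\gs|$, since a polynomially decaying bound is never dominated by a Gaussian. That imprecision is present in the paper's own formulation of the lemma, but your claimed "collapse to the Gaussian ambiguity function" asserts more than \eqref{cond1} can give.
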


\begin{proposition}\label{Prop_diffwin}
	Consider a collection of  windows $\Phi  = \{\phi_i, \, i \in \cI\}$ with \eqref{cond1} and such that $| \cV_{\gs} \gs (z)- \cV_{\gs} \phi_i (z)| <C_0 | \cV_{\gs} \gs (z)|$ for all $i \in \cI$ and $2 \, C_0< \|  \V_{\gs} \gs \|_1$. Furthermore consider a well-spread family of symbols $\{\eta_\gamma\}_{\gamma \in \Gamma}$, Then, the following inequalities hold for some positive constants $A$, $B$ and all $f\in L^2(\R )$:
	\begin{equation}\label{OpFI}
		A\| f\|_2^2 \leq \sum_{\gamma \in \Gamma} \| H_{\eta_\gamma, \phi_{i(\gamma)}} f\|_2^2\leq B\| f\|_2^2 \, .
	\end{equation}
\end{proposition}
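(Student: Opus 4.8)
The plan is to obtain \eqref{OpFI} from the norm equivalence of \cite{doro14} rather than prove both bounds by hand. Lemma \ref{Lem_WS} already shows that the varying-window family $\{H_{\eta_\gamma,\phi_{i(\gamma)}}\}_{\gamma\in\Gamma}$ is well-spread, and that is precisely the hypothesis under which the result of \cite{doro14} turns invertibility of the global operator into the two-sided estimate $\norm{f}_2^2 \approx \sum_{\gamma}\norm{H_{\eta_\gamma,\phi_{i(\gamma)}}f}_2^2$. Hence the whole proof reduces to showing that the positive operator $S := \sum_{\gamma\in\Gamma} H_{\eta_\gamma,\phi_{i(\gamma)}}$ is invertible. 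Note that, because the window is allowed to change with $\gamma$, $S$ is \emph{not} a single localization operator $H_{m,\phi}$, so its invertibility is genuinely new and must be established from the phase-space hypotheses. Writing $\langle Sf,f\rangle = \sum_{\gamma\in\Gamma}\int_{\R^2}\eta_\gamma(z)\,|\V_{\phi_{i(\gamma)}}f(z)|^2\,dz$, the task is to bound this quantity above and strictly below by positive multiples of $\norm{f}_2^2$.

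The strategy for the lower bound is a perturbation off the Gaussian reference operator $S_0 := \sum_{\gamma}H_{\eta_\gamma,\gs} = H_{m,\gs}$ with $m = \sum_{\gamma}\eta_\gamma \approx 1$. Since $\langle S_0 f,f\rangle = \int_{\R^2} m(z)\,|\V_{\gs}f(z)|^2\,dz \approx \norm{\V_{\gs}f}_2^2 = \norm{f}_2^2$, the operator $S_0$ is bounded and bounded below. Setting $\psi_i = \phi_i - \gs$, the hypothesis reads $|\V_{\gs}\psi_i(z)| = |\cV_{\gs}\gs(z) - \cV_{\gs}\phi_i(z)| < C_0\,|\V_{\gs}\gs(z)|$, and since the STFT is conjugate-linear in its window we have $\V_{\phi_i}f = \V_{\gs}f + \V_{\psi_i}f$. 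Expanding the modulus square and summing yields
\[
\langle Sf,f\rangle \;\geq\; \langle S_0 f,f\rangle \;-\; 2\sum_{\gamma\in\Gamma}\int_{\R^2}\eta_\gamma(z)\,|\V_{\gs}f(z)|\,|\V_{\psi_{i(\gamma)}}f(z)|\,dz,
\]
so everything hinges on controlling the cross term by the window-difference condition.

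The key estimate proceeds through the reproducing formula for the unit-norm Gaussian, which expresses $\V_{\psi_i}f$ as a (twisted) convolution of $\V_{\gs}f$ with $\V_{\gs}\psi_i$ and thus gives the pointwise bound $|\V_{\psi_i}f| \leq |\V_{\gs}f|\ast|\V_{\gs}\psi_i| \leq C_0\,\big(|\V_{\gs}f|\ast|\V_{\gs}\gs|\big)$, using the hypothesis together with the evenness of $|\V_{\gs}\gs|$. Inserting this bound, using $\sum_{\gamma}\eta_\gamma \approx 1$ to absorb the symbols, and applying Cauchy--Schwarz together with Young's inequality $\big\||\V_{\gs}f|\ast|\V_{\gs}\gs|\big\|_2 \leq \norm{\V_{\gs}\gs}_1\norm{f}_2$, the cross term is dominated by a multiple of $C_0\,\norm{\V_{\gs}\gs}_1\norm{f}_2^2$. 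The upper bound for $\langle Sf,f\rangle$ follows from the same convolution estimate, since $|\V_{\phi_{i(\gamma)}}f| \leq |\V_{\gs}f| + C_0\,|\V_{\gs}f|\ast|\V_{\gs}\gs|$ is a single $\gamma$-independent majorant in $L^2$.

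I expect the main obstacle to be the lower bound, and more precisely the bookkeeping of constants needed to reach exactly the stated threshold $2\,C_0 < \norm{\V_{\gs}\gs}_1$. The structural content — reducing to invertibility of $S$ via \cite{doro14} and Lemma \ref{Lem_WS}, then perturbing off $S_0$ — is clear, and the threshold is the natural phase-space analogue of the condition $2\norm{\phi_1-\phi_2}_2 < 1$ in Proposition \ref{prop_norm_equivalence}. The delicate step is to arrange the cross-term estimate so that the perturbation is strictly smaller than the lower bound coming from $S_0$, which forces $\langle Sf,f\rangle \geq A\norm{f}_2^2$ with $A>0$ exactly when $2\,C_0 < \norm{\V_{\gs}\gs}_1$; this is where any looseness in the Young/Cauchy--Schwarz step must be avoided (or compensated) to match the sharp constant.
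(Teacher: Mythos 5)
Your proposal is correct and takes essentially the same route as the paper: reduction via \cite{doro14} and Lemma \ref{Lem_WS} to invertibility of the operator sum, comparison against the Gaussian reference $S_0=\sum_\gamma H_{\eta_\gamma,\gs}$, and control of the perturbation through the change-of-window twisted-convolution bound $|\V_{\psi_i}f|\leq |\V_\gs f|\ast|\V_\gs\gs-\V_\gs\phi_i|\leq C_0\,|\V_\gs f|\ast|\V_\gs\gs|$ followed by Cauchy--Schwarz and Young, which is exactly the paper's estimate written with $\psi_i=\phi_i-\gs$ instead of the reverse triangle inequality for $\V_\gs f\natural\V_\gs\phi_{i(\gamma)}$. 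The constant-matching difficulty you flag at the end is real but is an inconsistency in the paper itself, not a gap in your argument: the paper's own computation likewise produces the lower bound $(1-2C_0\|\V_\gs\gs\|_1)\|f\|_2^2$, which is positive precisely when $2C_0<\|\V_\gs\gs\|_1^{-1}$, not under the stated condition $2C_0<\|\V_\gs\gs\|_1$.
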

\begin{proof}
	We start by recalling from \cite{doro14}, that for \eqref{OpFI} to hold, the family of operators need to fulfill two conditions. First, they must be well-spread and second, the sum of operators must be invertible, i.e., we require, for some $A>0$, that
	\begin{equation}\label{Opinv}
		\left\langle \sum_{\gamma \in \Gamma} H_{\eta_\gamma, \phi_{i(\gamma)}} f, f \right\rangle \geq A \| f\|_2^2 \, .
	\end{equation}
	Well-spreadness is stated in Lemma \ref{Lem_WS}. We proceed to prove equation \eqref{Opinv}. First note that we trivially have
	\begin{equation*}
		\left\langle \sum_{\gamma \in \Gamma} H_{\eta_\gamma, \gs} f, f \right\rangle = \underbrace{\sum_{\gamma \in \Gamma} \int_{\eta_\gamma} |\cV_{\gs } f (z)|^2 dz}_{= \| f\|_2^2}.
	\end{equation*}
	For $\widetilde{f},\widetilde{g} \in L^1\left(\R^2\right)$, the twisted convolution $\widetilde{f} \natural \widetilde{g}$ is given by
	\begin{equation}
		\widetilde{f} \natural \widetilde{g}(x,\omega) = \int_{\R^2} \widetilde{f}(x',\omega') \widetilde{g}(x-x',\omega-\omega') e^{2\pi i (x \cdot \omega'-x' \cdot \omega)} \, dx d\omega.
	\end{equation}
	We observe that
	\begin{equation}
		\begin{aligned}
			\langle  H_{\eta_\gamma, \phi_{i(\gamma)}} f, f \rangle = \langle  \chi_{\eta_\gamma,} \V_{\phi_{i(\gamma)}}  f, \V_{\phi_{i(\gamma)}}  f \rangle
			= \int_{\eta_\gamma} |\V_{\phi_{i(\gamma)}} f (z)|^2 dz = \int_{\eta_\gamma,} |\V_{\gs}  f \natural\V_{\gs} \phi_{i(\gamma)} (z)|^2 dz . \label{eq4}
		\end{aligned}
	\end{equation}
	By the reverse triangle inequality we have
	\begin{equation}
		\begin{aligned}
			|\cV_{\gs}  f \natural\cV_{\gs} \phi_{i(\gamma)} (z)| \geq & |\cV_{\gs}f (z) |-|\cV_{\gs}f (z)-\cV_{\gs}  f \natural\cV_{\gs} \phi_{i(\gamma)} (z)|.
		\end{aligned}
	\end{equation}
	Since
	\begin{equation}
		\begin{aligned}
			|\cV_{\gs}f -\cV_{\gs}  f \natural\cV_{\gs} \phi_{i(\gamma)}| = |\cV_{\gs}f \natural (\cV_{\gs} \gs  -\cV_{\gs} \phi_{i(\gamma)})|
			\leq \; |\cV_{\gs}f |\ast |\cV_{\gs} \gs -\cV_{\gs} \phi_{i(\gamma)} |
		\end{aligned}
	\end{equation}
	pointwise, we get
	\begin{equation}
		\begin{aligned}
		|\cV_{\gs}  f \natural\cV_{\gs} \phi_{i(\gamma)}(z)| \geq |\cV_{\gs}f (z) |- |\cV_{\gs}f |\ast |\cV_{\gs} \gs -\cV_{\gs} \phi_{i(\gamma)} |(z).
		\end{aligned}
	\end{equation}
	We proceed by inserting the previous expression and \eqref{eq4} into \eqref{Opinv}, hence
	\begin{align*}
		\sum_{\gamma \in \Gamma} \langle H_{\eta_\gamma, \phi_{i(\gamma)}} f, f \rangle \geq \|f\|_2^2
		- 2 \int_{\R^2} |\cV_{\gs} f (z)| \cdot ( |\cV_{\gs}f |\ast C_0 |\cV_{\gs} \gs|(z)) \, dz.
	\end{align*}
	Finally, since 
	\begin{align*}
		\int_{\R^2}  |\cV_{\gs}f (z)|\cdot \left( |\cV_{\gs}f |\ast |\cV_{\gs} \gs |(z) \right) \, dz
		\leq \|\cV_{\gs}f\|_2 \cdot \|\cV_{\gs}f \|_2 \|\cV_{\gs} \gs\|_1
	\end{align*}
	we obtain
	\begin{equation*}
		\sum_{i \in \cI} \langle H_{\eta_\gamma, \phi_{i(\gamma)}}f, f \rangle \geq (1-2 \, C_0 \|  \V_{\gs} \gs \|_1) \cdot \|f\|_2^2 
	\end{equation*}
	which proves the invertibility of the operator sum as desired, whenever  $2 \, C_0 < \|  \V_{\gs} \gs \|_1$.
\end{proof}
\subsection{Multi-window Gabor frames}
In \cite{doro14} it was shown that the norm equivalence of the sum of localized functions as stated in Prop.~\ref{prop_norm_equivalence} and Prop.\ref{Prop_diffwin} is maintained if the full spectral representation of the operators is replaced by truncated versions. Here, we use a variant of this idea to construct families of local windows which eventually yield multi-window weaving frames. We need the following proposition.
\begin{proposition}
	Let  windows $\phi_i$ and symbols be chosen as in Prop.~\ref{prop_norm_equivalence} or  Prop.~\ref{Prop_diffwin}, respectively. Then we also have that  $ \sum_{\gamma \in \Gamma} \| H_{\eta_\gamma, \phi_{i(\gamma)}}^2 f\|_2^2\approx \| f\|_2^2$.  As a consequence, there exist constants $A$ and $B$ such that
	\begin{align}\label{normeq}
		A \| f\|_2^2 \leq \sum_{\gamma \in \Gamma} \| H_{\eta_\gamma, \phi_{i(\gamma)}}^2 f\|_2^2 \leq \sum_{\gamma \in \Gamma} \| H_{\eta_\gamma, \phi_{i(\gamma)}} f\|_2^2\leq  B \| f\|_2^2 \, .
	\end{align}
	If we choose $\varepsilon < \frac{A}{B}$, then the family of collections of all eigenfunctions $\phi_k^\gamma$ of $H_{\eta_\gamma, \phi_{i(\gamma)}}$, $\gamma \in \Gamma$, corresponding to eigenvalues bigger than $\varepsilon $ generates a frame, i.e.
	\begin{equation}
		\sum_{\gamma \in \Gamma} \| H^\varepsilon_{\eta_\gamma, \phi_{i(\gamma)}} f\|_2^2 \approx \sum_{\gamma \in \Gamma}\sum_{k: \, \l_k^\gamma > \varepsilon}  | \langle f, \phi_k^\gamma \rangle |^2\approx \norm{f}_2^2
	\end{equation}
\end{proposition}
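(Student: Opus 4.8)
The plan is to reduce everything to the general norm-equivalence machinery of \cite{doro14}, applied twice, and then to read off the frame property from the spectral representation. The decisive first step is to prove $\sum_{\gamma\in\Gamma}\norm{H^2_{\eta_\gamma,\phi_{i(\gamma)}}f}_2^2\approx\norm{f}_2^2$. I would obtain this by invoking the principle of \cite{doro14} --- that a well-spread family of operators whose first-power sum is invertible induces a norm equivalence --- but applied to the \emph{composed} family $\{H^2_{\eta_\gamma,\phi_{i(\gamma)}}\}_{\gamma\in\Gamma}$ rather than to the operators themselves. The invertibility hypothesis is then free of charge: since $\big\langle\sum_\gamma H^2_{\eta_\gamma,\phi_{i(\gamma)}}f,f\big\rangle=\sum_\gamma\norm{H_{\eta_\gamma,\phi_{i(\gamma)}}f}_2^2$, Proposition~\ref{prop_norm_equivalence} (respectively Proposition~\ref{Prop_diffwin}) shows that $\sum_\gamma H^2_{\eta_\gamma,\phi_{i(\gamma)}}\approx I$ is invertible.

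The main obstacle is therefore the remaining hypothesis: that $\{H^2_{\eta_\gamma,\phi_{i(\gamma)}}\}$ is again well-spread as an operator family. I would verify this by iterating the pointwise bound of Lemma~\ref{Lem_WS}: writing $H^2_{\eta_\gamma}f=H_{\eta_\gamma}(H_{\eta_\gamma}f)$ and applying the lemma twice produces an estimate of $|\V_{\gs}H^2_{\eta_\gamma,\phi_{i(\gamma)}}f(z)|$ by a double convolution against the fixed envelope $|\V_{\gs}\gs|$, with the localizing weight $T_\gamma g$ entering at each step. Because $g$ decays polynomially of order $s>1$ and the windows satisfy \eqref{cond1}, these convolutions preserve summable polynomial decay, and the two localizations at the same node $\gamma$ reinforce one another rather than spreading out. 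The point requiring care is exactly to exhibit a single dominating envelope $\tilde g(\cdot-\gamma)$, with $\tilde g\in L^2(\R^2)$ of polynomial decay, valid uniformly in $\gamma$; this is the technical heart of the argument.

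Granting this, the chain \eqref{normeq} is immediate: its outer bounds are the two norm equivalences just discussed, while the middle inequality holds for each $\gamma$ separately. Indeed, under the standing normalization $\norm{\phi_i}_2=1$ and $0\le\eta_\gamma\le1$, Moyal's identity forces $0\le H_{\eta_\gamma,\phi_{i(\gamma)}}\le I$, so all eigenvalues satisfy $\l_k^\gamma\in[0,1]$; comparing the spectral representations then gives $\norm{H^2_{\eta_\gamma,\phi_{i(\gamma)}}f}_2^2=\sum_k(\l_k^\gamma)^4|\langle f,\phi_k^\gamma\rangle|^2\le\sum_k(\l_k^\gamma)^2|\langle f,\phi_k^\gamma\rangle|^2=\norm{H_{\eta_\gamma,\phi_{i(\gamma)}}f}_2^2$.

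Finally I would deduce the eigenfunction frame. The first $\approx$ in the conclusion is elementary: on the retained indices $\l_k^\gamma>\varepsilon$ one has $\varepsilon^2<(\l_k^\gamma)^2\le1$, so $\norm{H^\varepsilon_{\eta_\gamma,\phi_{i(\gamma)}}f}_2^2=\sum_{k:\,\l_k^\gamma>\varepsilon}(\l_k^\gamma)^2|\langle f,\phi_k^\gamma\rangle|^2$ is squeezed between $\varepsilon^2$ and $1$ times $\sum_{k:\,\l_k^\gamma>\varepsilon}|\langle f,\phi_k^\gamma\rangle|^2$. For the second $\approx$ the upper bound follows from $\sum_{k:\,\l_k^\gamma>\varepsilon}|\langle f,\phi_k^\gamma\rangle|^2\le\varepsilon^{-2}\norm{H_{\eta_\gamma,\phi_{i(\gamma)}}f}_2^2$, summed and bounded by $\varepsilon^{-2}B\norm{f}_2^2$. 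The lower bound is where the threshold enters: starting from $A\norm{f}_2^2\le\sum_\gamma\sum_k(\l_k^\gamma)^4|\langle f,\phi_k^\gamma\rangle|^2$, I split the inner sum at $\varepsilon$, estimate the retained part by $\sum_{k:\,\l_k^\gamma>\varepsilon}|\langle f,\phi_k^\gamma\rangle|^2$ via $(\l_k^\gamma)^4\le1$, and dominate the discarded part by $\varepsilon^2\sum_k(\l_k^\gamma)^2|\langle f,\phi_k^\gamma\rangle|^2=\varepsilon^2\norm{H_{\eta_\gamma,\phi_{i(\gamma)}}f}_2^2$ via $(\l_k^\gamma)^4\le\varepsilon^2(\l_k^\gamma)^2$. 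Summation and the upper bound $B$ yield $(A-\varepsilon^2 B)\norm{f}_2^2\le\sum_\gamma\sum_{k:\,\l_k^\gamma>\varepsilon}|\langle f,\phi_k^\gamma\rangle|^2$, and since $\varepsilon<A/B\le1$ gives $\varepsilon^2<\varepsilon<A/B$, the constant $A-\varepsilon^2 B$ is strictly positive, completing the frame inequality.
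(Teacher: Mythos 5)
Your proof is correct, and its overall strategy --- reduce to the norm-equivalence machinery of \cite{doro14}, then threshold the spectrum --- is also the paper's strategy; the two write-ups differ in how each half is executed. For the first claim, $\sum_{\gamma}\norm{H^2_{\eta_\gamma,\phi_{i(\gamma)}}f}_2^2\approx\norm{f}_2^2$, the paper gives no argument at all: it imports the statement from \cite{doro14} (this is what the paragraph preceding the proposition refers to). You instead re-derive it by applying the same principle to the squared family, observing correctly that invertibility of $\sum_{\gamma}H^2_{\eta_\gamma,\phi_{i(\gamma)}}$ comes for free from Prop.~\ref{prop_norm_equivalence} or Prop.~\ref{Prop_diffwin}, and leaving well-spreadness of the squared family as an acknowledged sketch; since the paper outsources exactly this point to the literature, this is not a gap relative to the paper's own standard, and the sketch can be completed with the standard weight-splitting bound $(1+|z-\gamma|^2)^{-s}\leq C\,(1+|w-\gamma|^2)^{-s}(1+|z-w|^2)^{s}$, the polynomial factor being absorbed by the Gaussian decay of $|\V_{\gs}\gs|$. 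For the frame bound, the paper argues at operator level (writing $H$ for $H_{\eta_\gamma,\phi_{i(\gamma)}}$): from $\norm{Hf}_2\leq\norm{H^\varepsilon f}_2+\varepsilon\norm{f}_2$ applied to $Hf$ in place of $f$, commutativity of $H$ and $H^\varepsilon$, and operator norm at most one, it gets $\norm{H^2f}_2\leq\norm{H^\varepsilon f}_2+\varepsilon\norm{Hf}_2$ and then sums. Your direct splitting of the spectral sums is the same idea executed more carefully: when the paper squares and sums its displayed inequality it silently drops the cross term $2\varepsilon\norm{H^\varepsilon f}_2\norm{Hf}_2$, which strictly speaking degrades the admissible threshold, whereas your computation is exact and even yields the better constant $A-\varepsilon^2B$ (positive already for $\varepsilon<\sqrt{A/B}$) in place of $A-\varepsilon B$. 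One hypothesis should be made explicit in both versions: the middle inequality of \eqref{normeq}, your bound $(\l_k^\gamma)^4\leq(\l_k^\gamma)^2$, and the paper's step $\norm{HH^\varepsilon f}_2\leq\norm{H^\varepsilon f}_2$ all require $\l_k^\gamma\leq1$, i.e.\ $0\leq\eta_\gamma\leq1$ together with $\norm{\phi_{i(\gamma)}}_2=1$; you state this as a standing normalization, but the paper's assumption $\sum_{\gamma}\eta_\gamma\approx1$ does not literally imply it, so it deserves a sentence in either write-up.
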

\begin{proof}
	Since $\norm{H_{\eta_\gamma, \phi_{i(\gamma)}} f}_2 \leq \norm{H^\varepsilon_{\eta_\gamma, \phi_{i(\gamma)} }f}_2 + \varepsilon \norm{f}_2$ we obtain
	\begin{equation}
		\begin{aligned}
			\norm{H_{\eta_\gamma, \phi_{i(\gamma)}}^2 f}_2 \leq \norm{H^\varepsilon_{\eta_\gamma, \phi_{i(\gamma)}} H_{\eta_\gamma, \phi_{i(\gamma)}} f}_2 + \varepsilon \norm{H_{\eta_\gamma, \phi_{i(\gamma)}} f}_2
		\end{aligned}
	\end{equation}
	and, since $H_{\eta_\gamma, \phi_{i(\gamma)}}$ and $H^\varepsilon_{\eta_\gamma, \phi_{i(\gamma)}}$ commute, this yields
	\begin{align}
		\norm{H_{\eta_\gamma, \phi_{i(\gamma)}}^2 f}_2 \leq \norm{H^\varepsilon_{\eta_\gamma, \phi_{i(\gamma)}} f}_2 + \varepsilon \norm{H_{\eta_\gamma, \phi_{i(\gamma)}} f}_2
	\end{align}
	Taking sums and using the norm-equivalences in \eqref{normeq}, we obtain
	\begin{equation}
		A \| f\|_2^2\leq \sum_{\gamma \in \Gamma }\norm{H^\varepsilon_{\eta_\gamma, \phi_{i(\gamma)}} f}_2^2 + B \varepsilon	\| f\|_2^2,
	\end{equation}
which implies the claim. \end{proof}
The above proposition leads to the following result.
\begin{theorem}\label{thm_localization_frames}
	Let the windows $\phi_i$ be chosen as in Prop.~\ref{prop_norm_equivalence} or  Prop.~\ref{Prop_diffwin} and consider a well-spread family of symbols $\{\eta_\gamma\}_{\gamma \in \Gamma}$, such that $\sum_{\gamma \in \Gamma} \eta_\gamma (z) \approx 1$. Then, there exists an $\varepsilon>0$, such that  for each fixed $\phi_i$, $i \in \cI$, the family of collections of all eigenfunctions $\phi_k^{\gamma,i} $ of $H_{\eta_\gamma, \phi_i}$, $\gamma \in \Gamma$, corresponding to eigenvalues bigger than $\varepsilon $ generates a (multi-window Gabor) frame and all these frames are woven.
\end{theorem}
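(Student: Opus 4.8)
The plan is to split the statement into its two claims: that for each fixed $i \in \cI$ the thresholded eigenfunctions form a frame, and that the resulting family of frames is woven. The first claim is a direct instance of the proposition preceding this theorem. Applying that proposition with the \emph{constant} window selection $i(\gamma) \equiv i$ produces a threshold $\varepsilon>0$ with
\begin{equation*}
	\sum_{\gamma \in \Gamma} \sum_{k:\, \l_k^{\gamma,i} > \varepsilon} |\langle f, \phi_k^{\gamma,i}\rangle|^2 \approx \norm{f}_2^2,
\end{equation*}
so that $\{\phi_k^{\gamma,i} : \gamma \in \Gamma,\ \l_k^{\gamma,i}>\varepsilon\}$ is a frame; the fact that $\varepsilon$ can be taken independent of $i$ is discussed below. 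The genuine content of the theorem is therefore the woven property, and my strategy is to observe that \emph{a partition of $\Gamma$ is exactly the same datum as a window selection function}.

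Concretely, I would proceed as follows. Let $\{\sigma_i\}_{i \in \cI}$ be any partition of $\Gamma$ and define $i(\gamma)$ to be the unique index with $\gamma \in \sigma_i(\Gamma)$. Since each $\gamma$ lies in exactly one block, the woven family built from this partition is
\begin{equation*}
	\bigcup_{i \in \cI} \{\phi_k^{\gamma,i} : \gamma \in \sigma_i(\Gamma),\ \l_k^{\gamma,i}>\varepsilon\}
	= \{\phi_k^{\gamma,i(\gamma)} : \gamma \in \Gamma,\ \l_k^{\gamma,i(\gamma)}>\varepsilon\}.
\end{equation*}
The right-hand side is precisely the thresholded eigenfunction system of the mixed-window localization operators $H_{\eta_\gamma, \phi_{i(\gamma)}}$. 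Because Prop.~\ref{prop_norm_equivalence}, Prop.~\ref{Prop_diffwin}, and the proposition preceding this theorem are all stated for an \emph{arbitrary} selection $\gamma \mapsto i(\gamma)$, applying that proposition to this particular selection yields
\begin{equation*}
	\sum_{\gamma \in \Gamma} \sum_{k:\, \l_k^{\gamma,i(\gamma)} > \varepsilon} |\langle f, \phi_k^{\gamma,i(\gamma)}\rangle|^2 \approx \norm{f}_2^2,
\end{equation*}
which says exactly that the woven family is a frame.

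The step that requires genuine care — and that I see as the main obstacle — is the uniformity of the constants: the threshold $\varepsilon$ must be fixed once and simultaneously serve every partition. This forces me to check that the frame bounds $A,B$ in \eqref{OpFI} do not depend on the selection $i(\gamma)$. The upper bound $B$ is uniform because well-spreadness (Lemma \ref{Lem_WS}) and condition \eqref{cond1} hold with the \emph{same} constants for every window in $\Phi$. The lower bound $A$ is uniform because the invertibility estimate in Prop.~\ref{Prop_diffwin} terminates in $(1 - 2\,C_0 \|\V_{\gs}\gs\|_1)\norm{f}_2^2$, a quantity built only from the common bound $|\V_{\gs}\gs - \V_{\gs}\phi_i| < C_0|\V_{\gs}\gs|$ valid for all $i$; similarly the constant $C_{\phi_1,\phi_2}$ in Prop.~\ref{prop_norm_equivalence} depends only on $\norm{\phi_1-\phi_2}_2$. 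Hence $A,B$ are independent of the selection, and choosing $\varepsilon < A/B$ once guarantees the frame inequality for every partition at the same time.

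Finally, I would add a structural remark to justify the parenthetical ``multi-window Gabor'' in the statement. When the symbols are translates $\eta_\gamma = \eta_0(\cdot-\gamma)$, the covariance relation $H_{\eta_\gamma,\phi_i} = \pi(\gamma)H_{\eta_0,\phi_i}\pi(\gamma)^{-1}$ gives $\phi_k^{\gamma,i} = \pi(\gamma)\phi_k^{0,i}$, so each frame is genuinely a multi-window Gabor frame generated by the finitely many windows $\{\phi_k^{0,i}\}_k$ corresponding to eigenvalues above $\varepsilon$, and the woveness established above is precisely woveness in the sense of the multi-window definition. This identification is what connects the result to the stated definition, but it plays no role in the frame estimates themselves.
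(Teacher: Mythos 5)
Your proof is correct and is precisely the argument the paper intends: the paper gives no explicit proof of this theorem, stating only that ``the above proposition leads to the following result,'' and your identification of a partition of $\Gamma$ with a window-selection function $\gamma \mapsto i(\gamma)$, followed by applying the preceding proposition to each such selection (constant selections for the individual frames, partition-induced selections for the weaving), is exactly that implicit argument. Your additional checks --- uniformity of the bounds $A$, $B$ (and hence of $\varepsilon$) over all selections, and the covariance relation identifying the thresholded eigenfunction systems as genuine multi-window Gabor systems when the symbols are translates --- are details the paper leaves unstated, and they are correct.
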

\section{Gaussian Windows and Elliptic Domains}\label{sec_Gaussian}
In this section we will have a look at localization operators on elliptic domains with an appropriate dilated Gaussian as carried out by Daubechies \cite{da88}. We denote the dilated standard Gaussian by
$	\varphi_{0,L}(t) = 2^{1/4} \sqrt{L} \, e^{-\pi (Lt)^2}$.
The dilated standard Gaussian is essentially concentrated in an ellipse, which is best seen by computing
\begin{equation}
	\left| \V_{\varphi_{0,L}}\varphi_{0,L}(x,\omega) \right| = e^{-\frac{\pi}{2} \left(L^2 x^2 + \frac{\omega^2}{L^2}\right)}.
\end{equation}
Therefore, the ellipse
\begin{equation}
	\mathbb{E}_{L,R} = \left\lbrace (x,\omega) \in \R^2 \, | \, L^2 x^2 + \frac{\omega^2}{L^2} \leq R^2 \right\rbrace
\end{equation}
is the appropriate domain to be used for the localization operator. The eigenfunctions of the localization operator $H_{\mathbb{E}_{L,R},\varphi_{0,L}}$ are the dilated Hermite functions \cite{da88}. The eigenvalues are given by
\begin{equation}
	\l_{L,k}(R) = \l_k(R)= 1 - e^{-\pi R^2} \sum_{j=0}^k \frac{1}{j!} \, \left(\pi R^2\right)^j.
\end{equation}
We note that the eigenvalues depend on the size, but not the shape of the ellipse, whereas the eigenfunctions depend on the shape, but not the size of the ellipse. As a next step we compute how far two dilated Gaussians differ from each other in the $L^2$-norm. For $L_1,L_2 > 0$ we have
\begin{equation}
	\norm{g_{L_1}-g_{L_2}}_2^2 = 2 - 2\frac{\sqrt{2 L_1 L_2}}{\sqrt{L_1^2+L_2^2}}.
\end{equation}
If we want $\norm{g_{L_1} - g_{L_2}}_2 < \frac{1}{2}$, then
\begin{equation}
	\frac{64 - \sqrt{1695}}{49} < \frac{L_2}{L_1} <  \frac{64 + \sqrt{1695}}{49}.
\end{equation}
Numerically, this means that for $0.47 < \frac{L_2}{L_1} < 2.14$ we get woven multi-window Gabor frames consisting of sufficiently many Hermite functions. The results can be extended to \textit{chirped} or \textit{rotated} Gaussians. A chirped, dilated Gaussian is of the form
\begin{equation}\label{eq_chriped_Gauss}
	\varphi_{c,L}(t) = 2^{1/4} e^{\pi i c t^2} \sqrt{L} e^{-\pi (L t)^2}, \quad c,L > 0.
\end{equation}
We compute
\begin{equation}\label{eq_chirp_ambiguity}
	\left| \V_{\varphi_{c,L}}\varphi_{c,L}(x,\omega) \right| = e^{-\frac{\pi}{2} \left(\left(L^2+\frac{c^2}{L^2} \right)x^2 + 2 \frac{c}{L^2} x \omega + \frac{\omega^2}{L^2}\right)}.
\end{equation}
Therefore, a chirped Gaussian is essentially concentrated in a rotated ellipse described by the quadratic form in the exponent in \eqref{eq_chirp_ambiguity}. For more details on Gaussians and their concentration in phase space see \cite{fau16}.

\section{Conclusion}
We have established sufficient criteria for multi-window Gabor frames consisting of eigenfunctions of a localization operator to be woven. In particular we found out that two finite families of Hermite functions can constitute woven multi-window frames. However, there seems to be a gap between the necessary condition we know from the Balian-Low theorem, which is already sufficient for Gaussians, and the number of Hermite functions we need in our phase-space approach. Also, the problem posed in \cite{becagrlaly16} asks whether any families of rotated Gaussians yield woven Gabor frames. We have seen that if the difference of the Gaussians in the $\Lt[]$-norm is less than $1/2$, we get weaving frames by taking a finite number of generalized Hermite functions derived from the original Gaussians. As a next step, it would be interesting to show that the finite number is $1$ and we only need to take the Gaussians if the index set is sufficiently dense  in phase-space, in particular in the case of a lattice with density greater than $1$.





%

\end{document}